\newtheorem{theorem}{Theorem}[section]
\newtheorem{lemma}[theorem]{Lemma}
\newtheorem{example}[theorem]{Example}
\newtheorem{corollary}[theorem]{Corollary}
\theoremstyle{remark}
\newtheorem{remark}[theorem]{Remark}
\numberwithin{equation}{section}
\begin{document}

\title{Pointwise estimates for the derivative of algebraic polynomials}

\author{Adrian Savchuk}
\address{Taras Shevchenko National University of Kyiv, Kyiv, Ukraine}
\curraddr{Faculty of Mathematics and Mechanics, Academician Glushkov Avenue 4,
 02000, Kyiv, Ukraine}
\address{University of Toulon, Toulon, France}
\curraddr{UFR Sciences et Techniques, 83130, La Garde, France}
\email{adrian.savchuk.v@gmail.com, adrian-savchuk@etud.univ-tln.fr}


\subjclass[2020]{Primary 30A08,	30E10; Secondary 41A20}



\keywords{Algebraic polynomial,  Logarithmic derivative, Bernstein inequality}

\begin{abstract}
 We  give the sufficient condition on coefficients $a_k$ of an algebraic polynomial $P(z)=\sum_{k=0}^{n}a_kz^k$, $a_n\not=0,$ for the pointwise Bernstein inequality $|P'(z)|\le n|P(z)|$
to be true for all $z\in\overline{\mathbb D}:=\{w\in\mathbb C : |w|\le 1\}$.
\end{abstract}

\maketitle

\section{Introduction and main result}
Let $P$ be an algebraic polynomial with complex coefficients and let $z_1, z_2,\ldots, z_m$ be a distinct zeros of $P$ with multiplicities $r_1, r_2,\ldots, r_m$ respectively, $\sum_{k=1}^{m}r_k=\deg P$. Further we assume that $z_k$ are numerated in arbitrary manner so that $|z_1|\le|z_2|\le\cdots\le|z_m|$.

Consider the real part of the logarithmic derivative of $P$. We have
\begin{eqnarray}\label{fraction}
\mathop{\rm Re}\frac{zP'(z)}{P(z)}&=&\mathop{\rm Re}\sum_{k=1}^{m}\frac{r_kz}{z-z_k}\\
&=&\frac{n}{2}+\frac{1}{2}\sum_{k=1}^{m}r_k\frac{|z|^2-|z_k|^2}{|z-z_k|^2}\label{fraction2},
\end{eqnarray}
where $n=\deg P$. It follows that 	for all $z\in\mathbb C\setminus\{z_1,\ldots, z_m\}$,
\begin{equation}\label{lower est}
\left|\frac{n}{2}+\frac{1}{2}\sum_{k=1}^{m}r_k\frac{|z|^2-|z_k|^2}{|z-z_k|^2}\right|\le
\left|\frac{zP'(z)}{P(z)}\right|.
\end{equation}

Denote $\mathbb D:=\{z\in\mathbb D : |z|<1\},$ $\mathbb T:=\{z\in\mathbb C : |z|=1\}$ and we let by $d\sigma$ denote the normalized Lebesgue measure on $\mathbb T$. Assume $z_k\not\in\mathbb T$, $k=1,\ldots,m,$ then by integrating the last inequality along $\mathbb T$ we get
\begin{equation}\label{lower est int}
\sum_{k=1}^{j}r_k\le\int_{\mathbb T}\left|\frac{P'}{P}\right|d\sigma\le\max_{z\in\mathbb T}\left|\frac{P'(z)}{P(z)}\right|,
\end{equation}
where $j$ is positive integer $\le m$ such that $|z_j|<1<|z_{j+1}|$. Here and further we put $\sum_{k=1}^{0}=0$. 

From (\ref{lower est}), (\ref{lower est int}) and from the well-known Bernstein inequality, that say that 
\begin{equation}\label{B1}
\max_{z\in\mathbb T}|P'(z)|\le n\max_{z\in\mathbb T}|P(z)|,
\end{equation}
we readily conclude that for any algebraic polynomial $P$, $\deg P=n,$  having all its zeros in $\mathbb D$, the following inequalities holds
\begin{equation}\label{ineq}
\frac{n}{1+|z_m|}\le\min_{t\in\mathbb T}\left|\frac{P'(z)}{P(z)}\right|\le n\le \max_{z\in\mathbb T}\left|\frac{P'(z)}{P(z)}\right|.
\end{equation}
The first inequality was observed by Govil \cite{Gov}, the second one is the consequence of (\ref{B1}) and the third one is the consequence of (\ref{lower est int}). All these results are sharp. The equalities occurs for the polynomial $P(z)=a_n(z-c)^n$ for suitable $c\in\mathbb D$.

Assume now that all zeros $z_1,\ldots, z_m$ of $P$ lies in the domain $\mathbb U:=\{z\in\mathbb C : |z|\ge1\}$. Then it follows from (\ref{fraction2}) that
\[
\mathrm{Re}\frac{zP'(z)}{P(z)}\le\frac{n}{2}
\]
for all $z\in\overline{\mathbb D}\setminus\{z_1,\ldots,z_m\}$. This gives, as was noted by Aziz \cite{Aziz} (see also Lemma \ref{prop1} below),
\begin{equation}\label{Aziz ineq}
|zP'(z)|\le|nP(z)-zP'(z)|
\end{equation}
for all $z\in\overline{\mathbb D}$.

On the other side, it is easy to see that if $P$ is a  polynomial of degree $n$ having all its zeros in $\mathbb U_2:=\{z\in\mathbb C : |z|\ge 2\}$, then
\begin{eqnarray*}
	\max_{z\in\overline{\mathbb D}}\left|\frac{zP'(z)}{P(z)}\right|&\le&\sum_{k=1}^{m}\frac{r_k}{|z_k|-1}\\
	&\le&n.
\end{eqnarray*}
This is equivalent to
\begin{equation}\label{B2}
|zP'(z)|\le n|P(z)|
\end{equation}
for all $z\in\overline{\mathbb D}$. We will call the last relation a pointwise Bernstein inequality.

Combining (\ref{Aziz ineq}) and (\ref{B2}), we obtain, for all $z\in\overline{\mathbb D}$,
\begin{equation}\label{B4}
|zP'(z)|\le\min\left(|nP(z)-zP'(z)|, n|P(z)|\right),
\end{equation}
provided $\{z_1,\ldots,z_m\}\in\mathbb U_2$. For example, if $P(z)=(2+z)^n$, then (\ref{B4}) gives
\[
|z|\le\min\left(2,|2+z|\right)=
\begin{cases}
2,\hfill&\mbox{if}~z\in\overline{\mathbb D}\setminus\{w\in\mathbb C: |2+w|\le 2\},\cr
|2+z|,\hfill&\mbox{otherwise},
\end{cases}
\]
for all $z\in\overline{\mathbb D}$. Equality occurs here only in the point $z=-1$.

In this note we  give the sufficient condition on coefficients of a polynomial $P$ for the pointwise Bernstein inequality
to be true for all $z\in\overline{\mathbb D}$. As we will see, this condition implies (\ref{B4}) and does not require that all zeros  of $P$ must be in $\mathbb U_2$. 

For further information about the estimates of derivative  and the logarithmic derivative of polynomials we refer to \cite{Mil}, \cite{Rah}, \cite{Sheil-Small}, \cite{Danch}  and references therein. 

Our main result is the following theorem.

\begin{theorem}\label{main thm} Let $n\in\mathbb Z_+$ and $\{k_\nu\}_{\nu=0}^n$, $0\le k_0<k_1<\ldots<k_n$, be positive integers and let $P(z)=\sum_{\nu=0}^{n}a_\nu z^{k_\nu}$ be an algebraic polynomial of degree $k_n$ with coefficients $\{a_\nu\}_{\nu=0}^n\in\mathbb C\setminus\{0\}$. If
	\begin{equation}\label{main condition}
	\mathop{\rm \min_{t\in\overline{\mathbb D}}Re}\sum_{j=0}^{n-\nu}\frac{a_{j+\nu}}{a_{\nu}}t^{k_{j+\nu}-k_\nu}\ge\frac{1}{2},\quad\nu=0,1,\ldots,n,
	\end{equation}
	then the following assertions are holds true:
	
	(i) the polynomial $P$ have no zeros in $\overline{\mathbb D}$, provided  $k_0=0$, and have no zeros in $\overline{\mathbb D}\setminus\{0\}$ for $k_0>0$;
	
	(ii) for all $z\in\overline{\mathbb D}$
	\begin{equation}\label{B3}
	|zP'(z)|\le k_n|P(z)|.
	\end{equation}
	
	If $z\in\mathbb D$ the equality occurs here only in case $n=0$, that is for $P(z)=a_0z^{k_0}$, $k_0>0$;
	
	(iii) if $k_0=0$ and $n\ge 1$, then we have
	\begin{equation}\label{strong B}
	|P'(z)|< k_n|P(z)|
	\end{equation}
	for all $z\in\mathbb D$.
\end{theorem}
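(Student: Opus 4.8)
The plan is to deduce everything from a single identity: for $\nu$ in the range of indices, the quantity $\sum_{j=0}^{n-\nu}\frac{a_{j+\nu}}{a_\nu}t^{k_{j+\nu}-k_\nu}$ is, up to the factor $a_\nu t^{k_\nu}$, a ``tail'' of $P$, and differentiating $P$ term by term one sees that $zP'(z)$ is a weighted combination of these tails with weights $k_{j+\nu}$. More precisely, I would first write $zP'(z)=\sum_{\nu=0}^{n}k_\nu a_\nu z^{k_\nu}$ and then Abel-summon: rewrite $\sum_{\nu}k_\nu a_\nu z^{k_\nu}$ using $k_\nu=k_n-\sum_{\mu=\nu}^{n-1}(k_{\mu+1}-k_\mu)$, so that
\[
k_nP(z)-zP'(z)=\sum_{\mu=0}^{n-1}(k_{\mu+1}-k_\mu)\sum_{\nu=0}^{\mu}a_\nu z^{k_\nu}
=\sum_{\mu=0}^{n-1}(k_{\mu+1}-k_\mu)\,\overline{z}^{\,?}\!\cdots
\]
— the point being that each inner sum $\sum_{\nu=0}^{\mu}a_\nu z^{k_\nu}$ is a partial sum of $P$ from the bottom, whereas condition \eqref{main condition} controls partial sums from the top. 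So I would instead organize the Abel summation to produce the \emph{top} tails $T_\nu(z):=\sum_{j=0}^{n-\nu}a_{j+\nu}z^{k_{j+\nu}}=a_\nu z^{k_\nu}S_\nu(z)$ where $S_\nu(z):=\sum_{j=0}^{n-\nu}\frac{a_{j+\nu}}{a_\nu}z^{k_{j+\nu}-k_\nu}$ and $\mathrm{Re}\,S_\nu\ge\frac12$ on $\overline{\mathbb D}$ by hypothesis. Writing $k_\nu$ as a telescoping sum of the \emph{lower} gaps and rearranging, one gets an expression of the shape $nP(z)$ (with $n$ replaced by $k_n$) minus $zP'(z)$, or $zP'(z)$ itself, as $\sum_{\nu}(\text{positive weight})\,T_\nu(z)$; the exact bookkeeping is routine but must be done carefully.

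Granting such an identity, here is how the three assertions fall out. For (i): since $S_\nu(z)$ has positive real part (in fact $\ge\frac12$) on $\overline{\mathbb D}$ it never vanishes there, so $S_0(z)=P(z)/(a_0z^{k_0})$ is zero-free on $\overline{\mathbb D}$, giving no zeros in $\overline{\mathbb D}$ when $k_0=0$ and no zeros off the origin when $k_0>0$. For (ii): from the identity $zP'(z)=k_nP(z)-R(z)$ where $R(z)=\sum_{\mu=0}^{n-1}(k_{\mu+1}-k_\mu)\,a_?z^{?}S_?(z)$ is a nonnegative combination, divide by $P(z)=a_0z^{k_0}S_0(z)$; since $\mathrm{Re}(S_\mu/S_0)$ need not be controlled directly, I would instead argue via $\mathrm{Re}\frac{zP'(z)}{P(z)}$: show that this equals $k_n-\mathrm{Re}\frac{R(z)}{P(z)}$, and that $\mathrm{Re}\frac{R(z)}{P(z)}=\sum_\mu(k_{\mu+1}-k_\mu)\mathrm{Re}\frac{a_?z^?S_?}{a_0z^{k_0}S_0}$. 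The cleaner route: prove $\mathrm{Re}\frac{zP'(z)}{P(z)}\ge 0$ and $\mathrm{Re}\frac{zP'(z)}{P(z)}\le k_n$ on $\overline{\mathbb D}\setminus\{\text{zeros}\}$ — the second via \eqref{main condition}, expanding $\frac{zP'(z)}{P(z)}=\sum_\nu k_\nu\frac{a_\nu z^{k_\nu}}{P(z)}$ and regrouping by Abel summation into $k_n-\sum_{\mu}(k_{\mu+1}-k_\mu)\frac{\sum_{\nu\le\mu}a_\nu z^{k_\nu}}{P(z)}$; but again that is bottom partial sums. The honest fix is the representation $\frac{P(z)}{a_nz^{k_n}}\cdot\frac{1}{S_?}$ chaining consecutive ratios $S_{\nu}(z)/S_{\nu+1}(z)$, each having the form $1/(1+a_{\nu}^{-1}a_{\nu+1}z^{k_{\nu+1}-k_\nu}S_{\nu+1}/S_{\nu}\cdots)$; since this is getting intricate, I anticipate this regrouping is exactly the main obstacle and will require patient Abel summation plus the elementary fact that if $\mathrm{Re}\,w\ge\frac12$ then $\mathrm{Re}\frac1w\le 2\le 2\,\mathrm{Re}\,w$, i.e. $|1-w|\le|w|$, applied telescopically.

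Concretely, the cleanest packaging is: set $w_\nu(z):=S_\nu(z)/S_{\nu+1}(z)$ for $\nu=0,\dots,n-1$, note $S_n\equiv 1$ and $S_\nu=w_\nu w_{\nu+1}\cdots w_{n-1}$; from $S_\nu(z)=1+\frac{a_{\nu+1}}{a_\nu}z^{k_{\nu+1}-k_\nu}S_{\nu+1}(z)$ one reads off that $\mathrm{Re}\,w_\nu\ge\frac12$ is equivalent to a Schur-type bound on $\frac{a_{\nu+1}}{a_\nu}z^{k_{\nu+1}-k_\nu}\frac{S_{\nu+1}}{S_\nu}$, and the hypothesis \eqref{main condition} says exactly $\mathrm{Re}(w_{\nu}w_{\nu+1}\cdots w_{n-1})\ge\frac12$ on $\overline{\mathbb D}$, which by the half-plane lemma ($\mathrm{Re}\,W\ge\frac12 \Longleftrightarrow |W-1|\le|W| \Longleftrightarrow |\tfrac1W-1|\le 1$) forces $|z\frac{d}{dz}\log P(z)|\le k_n$. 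Explicitly, $z\frac{P'(z)}{P(z)}=k_0+\sum_{\mu=0}^{n-1}(k_{\mu+1}-k_\mu)\bigl(1-\tfrac{1}{S_\mu(z)}\bigr)$ after Abel summation (this is the identity I would verify first by induction on $n$), and since $|1-\tfrac{1}{S_\mu}|\le 1$ we get $|zP'(z)/P(z)|\le k_0+\sum_{\mu=0}^{n-1}(k_{\mu+1}-k_\mu)=k_n$, which is \eqref{B3}. For the strictness claims in (ii) and (iii): on $\mathbb D$ the inequality $|1-\tfrac{1}{S_\mu(z)}|\le 1$ is strict unless $\mathrm{Re}\,\tfrac{1}{S_\mu(z)}=0$, impossible since $\mathrm{Re}\,\tfrac{1}{S_\mu}>0$ whenever $\mathrm{Re}\,S_\mu\ge\tfrac12$; hence every summand with $k_{\mu+1}>k_\mu$ contributes a strict inequality, so when $n\ge1$ we get $|zP'(z)|<k_n|P(z)|$ on $\mathbb D$, and if moreover $k_0=0$ then $P(0)=a_0\ne0$ lets us divide by $z$ to obtain \eqref{strong B}; when $n=0$, $P=a_0z^{k_0}$ and $|zP'|=k_0|P|$ with equality everywhere. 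The lone genuinely delicate point is establishing the displayed logarithmic-derivative identity and checking that the relevant ratios $S_\mu$ are finite and zero-free on $\overline{\mathbb D}$ (which is part (i)), so I would prove (i) first, then the identity, then read off (ii) and (iii).
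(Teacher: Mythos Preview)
Your overall strategy---Abel summation on $zP'(z)$ to express $zP'/P$ via the tails $T_\nu=a_\nu z^{k_\nu}S_\nu$, then bound each piece using $|1-1/S_\mu|\le 1\Longleftrightarrow\mathrm{Re}\,S_\mu\ge\tfrac12$---is sound and is a genuinely different route from the paper (which instead proves a divided-difference inequality $\bigl|z\frac{P(z)-P(w)}{z-w}\bigr|\le A(z,w)\max_k|\rho_{k+1}(P)(z)|$ and then sends $w\to z$). However, the proposal contains two concrete errors.

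\textbf{The identity is wrong.} Abel summation gives
\[
zP'(z)=k_0T_0(z)+\sum_{\mu=0}^{n-1}(k_{\mu+1}-k_\mu)\,T_{\mu+1}(z),
\]
so dividing by $P=T_0$ yields
\[
\frac{zP'(z)}{P(z)}=k_0+\sum_{\mu=0}^{n-1}(k_{\mu+1}-k_\mu)\,\frac{T_{\mu+1}(z)}{T_0(z)}
=k_0+\sum_{\mu=0}^{n-1}(k_{\mu+1}-k_\mu)\prod_{j=0}^{\mu}\Bigl(1-\frac{1}{S_j(z)}\Bigr),
\]
since $T_{j+1}/T_j=1-1/S_j$. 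Your displayed formula has $\bigl(1-\tfrac{1}{S_\mu}\bigr)$ in place of the product; a quick check with $P(z)=1+z+z^2$ at $z=1$ shows your version gives $7/6$ while $zP'/P=1$. Fortunately the corrected identity still yields $|zP'/P|\le k_0+\sum(k_{\mu+1}-k_\mu)=k_n$ because each factor in the product has modulus $\le 1$, so (ii) survives. (Your equality discussion is also off: $|1-1/S_\mu|=1$ is equivalent to $\mathrm{Re}\,S_\mu=\tfrac12$, not to $\mathrm{Re}(1/S_\mu)=0$; the strict inequality on $\mathbb D$ follows instead from the minimum principle for the harmonic function $\mathrm{Re}\,S_\mu$, noting $S_\mu(0)=1$.)

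\textbf{Part (iii) does not follow by ``dividing by $z$''.} From $|zP'(z)|<k_n|P(z)|$ you only get $|P'(z)|<\tfrac{k_n}{|z|}|P(z)|$, which is \emph{weaker} than $|P'(z)|<k_n|P(z)|$ on $\mathbb D$. An extra idea is needed. The paper applies the Schwarz lemma to $F(t)=tP'(t)/(k_nP(t))$, which is analytic on $\mathbb D$ by (i), bounded by $1$ by (ii), and satisfies $F(0)=0$ when $k_0=0$; this gives $|F(t)|\le|t|$, i.e.\ $|P'|\le k_n|P|$, with strictness after ruling out $F(t)=ct$ via the hypothesis. Within your framework you could equivalently apply Schwarz to $g(z):=1-1/S_0(z)$ (analytic, $|g|\le1$, $g(0)=0$) to get $|1-1/S_0(z)|\le|z|$, which cancels the $1/|z|$ in the corrected identity; but some form of Schwarz is unavoidable here.
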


\begin{remark}
	Let $P$ be as in Theorem \ref{main thm}. Then we have the implication $(ii)\Rightarrow(i)$.
\end{remark}
This is a consequence of the Riemann's theorem on removable singularities applied to the function
\[
z\mapsto\frac{zP'(z)}{P(z)}=\sum_{k=1}^{m}\frac{r_kz}{z-z_k}.
\]

\begin{corollary}
	Let $P$ be as in Theorem \ref{main thm} with $a_0\ge a_1\ge\ldots\ge a_{n}>0$, $n\in\mathbb N$ and $k_0=0$. If 
	\[
	0\le\Delta^2 (a_\nu):=
	\begin{cases}
	a_{\nu+2}-2a_{\nu+1}+a_\nu,\hfill&\mbox{if}~\nu=0,1,\ldots,n-2,\cr
	a_{n-1}-2a_n,\hfill&\mbox{if}~\nu=n-1,\cr
	a_n,\hfill&\mbox{if}~\nu=n,
	\end{cases}
	\]
	then there holds
	\[
	|zP'(z)|\le\min\left(|k_nP(z)-zP'(z)|, k_n|P(z)|\right)
	\]
	for all $z\in\overline{\mathbb D}$.
\end{corollary}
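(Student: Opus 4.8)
The plan is to show that the divided-difference hypothesis on the coefficients $a_\nu$ is exactly a reformulation of the main condition \eqref{main condition} of Theorem \ref{main thm}, after which both inequalities follow: the bound $|zP'(z)|\le k_n|P(z)|$ is conclusion (ii) of the theorem, while the bound $|zP'(z)|\le|k_nP(z)-zP'(z)|$ follows from Aziz's inequality \eqref{Aziz ineq} once we know, by conclusion (i), that $P$ has no zeros in $\overline{\mathbb D}$ (indeed, since $k_0=0$, the theorem gives $P\neq 0$ on all of $\overline{\mathbb D}$, and since $a_\nu>0$ all zeros lie in $\mathbb U=\{|z|\ge 1\}$, which is the hypothesis needed for \eqref{Aziz ineq}). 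So the whole corollary reduces to verifying \eqref{main condition}.

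To verify \eqref{main condition}, fix $\nu\in\{0,1,\dots,n\}$ and set $Q_\nu(t)=\sum_{j=0}^{n-\nu}(a_{j+\nu}/a_\nu)\,t^{k_{j+\nu}-k_\nu}$. Since $a_\nu>0$, the condition $\min_{t\in\overline{\mathbb D}}\operatorname{Re}Q_\nu(t)\ge \tfrac12$ is equivalent to $\min_{t\in\overline{\mathbb D}}\operatorname{Re}\sum_{j=0}^{n-\nu}a_{j+\nu}t^{k_{j+\nu}-k_\nu}\ge \tfrac12 a_\nu$. First I would reduce the minimum over $\overline{\mathbb D}$ to the minimum over $\mathbb T$: a function of the form $\operatorname{Re}\sum_\ell c_\ell t^{\ell}$ with $\ell\ge 0$ is harmonic, so by the minimum principle its minimum on $\overline{\mathbb D}$ is attained on $\mathbb T$; writing $t=e^{i\theta}$, we must bound $\operatorname{Re}\sum_{j=0}^{n-\nu}a_{j+\nu}e^{i(k_{j+\nu}-k_\nu)\theta}=\sum_{j=0}^{n-\nu}a_{j+\nu}\cos\bigl((k_{j+\nu}-k_\nu)\theta\bigr)$ from below.

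The key step is an Abel summation (summation by parts) applied to $\sum_{j} a_{j+\nu}\cos(m_j\theta)$, where I abbreviate $m_j=k_{j+\nu}-k_\nu$ for $j=0,\dots,n-\nu$, so $0=m_0<m_1<\dots<m_{n-\nu}$. Summing by parts twice moves the two discrete differences onto the coefficient sequence, producing the second differences $\Delta^2(a_\nu)$ (with the stated boundary conventions $a_{n-1}-2a_n$ and $a_n$ accounting for the truncation of the sequence), multiplied against the second-difference partial sums of the cosine Dirichlet-type kernel $D_j(\theta)=\sum_{i\le j}\cos(m_i\theta)$ or its further partial sums; the point is that the relevant kernel sums are nonnegative on the range of $\theta$, so that the hypothesis $\Delta^2(a_\nu)\ge 0$ forces the whole expression to be at least its value coming from the leading term, which is exactly $\tfrac12 a_\nu$ (the constant term $a_\nu\cdot\cos 0 = a_\nu$ contributes, and one checks the remaining "boundary" contributions assemble to $-\tfrac12 a_\nu$). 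The main obstacle is precisely the positivity of these partial-sum kernels: for a full gap-free cosine sum this is the classical Fejér–Jackson / Rogosinski–Szegő nonnegativity of $\sum_{i=0}^{N}\cos i\theta$ and of its Cesàro means, but here the exponents $m_i$ are an arbitrary increasing integer sequence, so I would need the monotone rearrangement argument (or a direct induction on $n-\nu$) showing that inserting gaps into the exponents only helps — concretely, that $\sum_{i=0}^{j}\cos(m_i\theta)$ and the Cesàro-type sum $\sum_{j=0}^{J}\sum_{i=0}^{j}\cos(m_i\theta)$ remain $\ge 0$ for real $\theta$ when $0=m_0<m_1<\cdots$ are integers. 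Granting that, \eqref{main condition} holds for every $\nu$, and the corollary follows from Theorem \ref{main thm} together with \eqref{Aziz ineq}.
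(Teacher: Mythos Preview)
Your overall plan---verify \eqref{main condition} and then combine Theorem~\ref{main thm}(i),(ii) with Aziz's inequality \eqref{Aziz ineq}---is exactly what the paper does. The divergence is in how \eqref{main condition} gets checked. The paper simply observes that for each $\nu$ the sequence $\lambda_{k,\nu}=a_{k+\nu}/a_\nu$ (extended by a terminal zero) is nonnegative, nonincreasing and convex, and then cites the classical Fej\'er theorem to conclude $\tfrac12\lambda_{0,\nu}+\sum_{k\ge1}\lambda_{k,\nu}\cos kx\ge0$. Note the arguments are $kx$, not $(k_{j+\nu}-k_\nu)x$: the paper is tacitly treating the gap-free case $k_\nu=\nu$, where Fej\'er's theorem applies verbatim.

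Your attempt to cover arbitrary gaps $m_j=k_{j+\nu}-k_\nu$ is where things break. Two points. First, the Dirichlet-type partial sums $\sum_{i\le j}\cos(m_i\theta)$ are not nonnegative even without gaps (for instance $1+\cos\theta+\cos2\theta=\cos\theta(1+2\cos\theta)<0$ on $(\pi/2,2\pi/3)$); only the second Ces\`aro sums---the Fej\'er kernels---are, so a single Abel step buys nothing. Second, and decisively, the assertion you ``grant'' that inserting gaps can only help is false, and in fact the corollary as literally stated fails for non-consecutive exponents. Take $n=2$, $(k_0,k_1,k_2)=(0,1,3)$ and $(a_0,a_1,a_2)=(3,2,1)$: all three conditions $\Delta^2(a_\nu)\ge0$ hold, yet at $t=-1$ one has $\operatorname{Re}\sum_j(a_j/a_0)t^{k_j}=1-\tfrac23-\tfrac13=0<\tfrac12$, so \eqref{main condition} fails for $\nu=0$; and indeed $P(z)=3+2z+z^3=(z+1)(z^2-z+3)$ vanishes at $z=-1\in\overline{\mathbb D}$, so the conclusion $|zP'(z)|\le k_n|P(z)|$ is already violated there. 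Read the corollary with $k_\nu=\nu$; then your double Abel summation collapses to the textbook proof of Fej\'er's theorem and the obstacle disappears.
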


Indeed, for each $\nu=0,1,\ldots, n$ the sequence $\{\lambda_{k,\nu}\}_{k=0}^{n-\nu+1},$ where
\[
\lambda_{k,\nu}=
\begin{cases}
\displaystyle\frac{a_{k+\nu}}{a_\nu},\hfill&\mbox{if}~k=0,1,\ldots,n-\nu,\cr
0,\hfill&\mbox{if}~k=n-\nu+1,
\end{cases}
\]
is non-negative, monotonically non-increasing and convex, i. e. $\lambda_{0,\nu}\ge\lambda_{1,\nu}\ge\ldots\ge\lambda_{n-\nu,\nu}>\lambda_{n-\nu+1,\nu}=0$ and $\Delta^2(\lambda_{k,\nu})\ge0$ for $k=0,1,\ldots,n-\nu+1$. Thus by Fej\'er theorem (see \cite{Mil}, p. 310) the trigonometric polynomials
\[
\frac{\lambda_{0,\nu}}{2}+\sum_{k=1}^{n-\nu}\lambda_{k,\nu}\cos kx,\quad\nu=0,1,\ldots,n,
\]
are non-negative for all $x\in\mathbb R$. This is equivalent to the condition (\ref{main condition}).

\begin{example}
	Let $n\in\mathbb N\setminus\{1\}$  and let
	\[
	P(z)=\sum_{k=0}^{n}(n+1-k)z^k.
	\]
	Then for $t=\mathrm e^{\mathrm ix},$ $x\in\mathbb R$, we have
	\begin{eqnarray*}
		\frac{1}{2}+\mathop{\rm Re}\sum_{k=1}^{n-\nu}\frac{n+1-(k+\nu)}{n+1-\nu}t^k&=&\frac{1}{2}+\sum_{k=1}^{n-\nu}\left(1-\frac{k}{n+1-\nu}\right)\cos kx\\
		&=&F_{n-\nu+1}(x)\ge 0,
	\end{eqnarray*}
	for all $x\in\mathbb R,~\nu=0,1,\ldots,n$, where $F_k$ is the Fej\'er kernel (see \cite{Mil}, pp. 311, 313).
	
	Therefore, combining (\ref{Aziz ineq}) and (\ref{B3}), we get
	\[
	\left|\sum_{k=1}^{n}(n+1-k)kz^k\right|\le\min\left(\left|\sum_{k=0}^{n-1}(n+1-k)(n-k)z^k\right|,n\left|\sum_{k=0}^{n}(n+1-k)z^k\right|\right). 
	\]
	
	By Enestr\"om--Kakeya theorem (see \cite{Rah}, p.255) with refinement given by Anderson, Saff and Varga \cite{And} (see Corollary 2), zeros of $P$ satisfy $|z_k|<2$, $k=1,\ldots,n$. 
\end{example}


\section{Lemmas}

For the proof of Theorem 1.1 we require the following lemmas.

\begin{lemma}\label{prop1}
	Let $P$ and $Q$ be a functions defined on a compact set $K\subset\mathbb C$, $\mathcal Z(Q):=\{z\in\mathbb C : Q(z)=0\}$ and $K\setminus\mathcal Z(Q)\not=\emptyset$. In order that 
	\[
	|P(z)-Q(z)|\le|P(z)|
	\]
	for all $z\in K$ it is necessary and sufficient that
	\[
	\inf_{z\in K\setminus\mathcal Z(Q)}\mathop{\rm Re}\frac{P(z)}{Q(z)}\ge\frac{1}{2}.
	\]
\end{lemma}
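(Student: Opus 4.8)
The plan is to prove the equivalence by a direct algebraic manipulation of the inequality $|P(z)-Q(z)|\le|P(z)|$, which only has content at points $z\in K$ where neither $P(z)$ nor $Q(z)$ vanishes. First I would dispose of the trivial cases: at a point $z\in\mathcal Z(Q)$ the inequality $|P(z)-Q(z)|\le|P(z)|$ reads $|P(z)|\le|P(z)|$, which always holds, and such points are precisely the ones excluded from the infimum, so they are irrelevant to both sides; if $P(z)=0$ but $Q(z)\ne0$, then the left inequality says $|Q(z)|\le0$, i.e. $Q(z)=0$, a contradiction — so in fact $|P(z)-Q(z)|\le|P(z)|$ for all $z\in K$ already forces $P(z)\ne0$ whenever $Q(z)\ne0$, and then $\mathop{\rm Re}\frac{P(z)}{Q(z)}$ is well defined on all of $K\setminus\mathcal Z(Q)$.

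The core step is the pointwise identity. Fix $z\in K\setminus\mathcal Z(Q)$ with $P(z)\ne0$ and divide $|P(z)-Q(z)|\le|P(z)|$ by $|Q(z)|>0$; writing $w=\dfrac{P(z)}{Q(z)}$, the inequality becomes $|w-1|\le|w|$, i.e. $|w-1|^2\le|w|^2$. Expanding, $|w|^2-2\mathop{\rm Re}w+1\le|w|^2$, which is exactly $\mathop{\rm Re}w\ge\tfrac12$. Thus for every such $z$,
\begin{equation*}
|P(z)-Q(z)|\le|P(z)|\quad\Longleftrightarrow\quad\mathop{\rm Re}\frac{P(z)}{Q(z)}\ge\frac12 .
\end{equation*}
Now I assemble the two directions. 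For necessity: if $|P(z)-Q(z)|\le|P(z)|$ holds for all $z\in K$, then in particular it holds for all $z\in K\setminus\mathcal Z(Q)$, where (by the case analysis above) $P(z)\ne0$, so the displayed equivalence gives $\mathop{\rm Re}\frac{P(z)}{Q(z)}\ge\tfrac12$ at every such point; taking the infimum over $z\in K\setminus\mathcal Z(Q)$ (a nonempty set by hypothesis) yields $\inf_{z\in K\setminus\mathcal Z(Q)}\mathop{\rm Re}\frac{P(z)}{Q(z)}\ge\tfrac12$. For sufficiency: if the infimum is $\ge\tfrac12$, then in particular $\mathop{\rm Re}\frac{P(z)}{Q(z)}\ge\tfrac12$ for every $z\in K\setminus\mathcal Z(Q)$ — note this forces $P(z)\ne0$ there, since otherwise the ratio would be $0$ — so the equivalence gives $|P(z)-Q(z)|\le|P(z)|$ on $K\setminus\mathcal Z(Q)$, while on $\mathcal Z(Q)$ the inequality is the trivial $|P(z)|\le|P(z)|$; hence it holds on all of $K$.

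I do not expect a serious obstacle here; the only subtlety worth being careful about is the bookkeeping around zeros of $P$ and $Q$, namely checking that the quantity $\mathop{\rm Re}\frac{P(z)}{Q(z)}$ in the statement is automatically meaningful on $K\setminus\mathcal Z(Q)$ once either hypothesis is in force (so that one is not secretly dividing by zero or taking an infimum of an ill-defined expression), and noting that the assumption $K\setminus\mathcal Z(Q)\ne\emptyset$ is exactly what makes the infimum a genuine infimum over a nonempty set. Everything else is the one-line computation $|w-1|^2\le|w|^2\Leftrightarrow\mathop{\rm Re}w\ge\tfrac12$.
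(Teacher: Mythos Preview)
Your proof is correct and follows the same approach as the paper: the paper simply records the identity $\left|\frac{P}{Q}\right|^2-\left|\frac{P}{Q}-1\right|^2=2\,\mathrm{Re}\frac{P}{Q}-1$ for $z\in K\setminus\mathcal Z(Q)$ and declares the lemma immediate, which is exactly your computation $|w-1|^2\le|w|^2\Leftrightarrow\mathrm{Re}\,w\ge\tfrac12$. Your additional bookkeeping about the cases $Q(z)=0$ and $P(z)=0$ is more careful than the paper's one-line argument, but not a different method.
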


\begin{proof}The assertion readily follows from the obvious identity 
	\[
	\left|\frac{P(z)}{Q(z)}\right|^2-\left|\frac{P(z)}{Q(z)}-1\right|^2=2\mathbb{\rm Re}\frac{P(z)}{Q(z)}-1
	\]
	for $z\in K\setminus\mathcal Z(Q)$.
\end{proof}

\begin{lemma}
	\label{lemma2}
	Let $P(z)=\sum_{j=0}^{n}a_jz^j$, $n\in\mathbb N,$ and $a_n\not=0$. Then for all $z\in\mathbb C\setminus\{0\}$ and $w\in\mathbb C$ we have
	\[
	\left|z\frac{P(z)-P(w)}{z-w}\right|\le A(z,w)\max_{k=0,\ldots,n-1}\left|P(z)-\sum_{j=0}^{k}a_jz^j\right|,
	\]
	where
	\[
	A(z,w)=
	\begin{cases}
	\displaystyle \frac{|z|^n-|w|^n}{|z|^{n-1}(|z|-|w|)},\hfill&\mbox{if}~|z|\not=|w|,\cr
	n,\hfill&\mbox{if}~|z|=|w|.
	\end{cases}
	\]
	The result is best possible and the equality holds for the polynomial $P(z)=a_0+a_nz^n$ in case $\arg z=\arg w$.
\end{lemma}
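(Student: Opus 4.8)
The plan is to reduce the inequality to a statement about partial sums of $P$ via the factorization $P(z)-P(w)=(z-w)\sum_{k=0}^{n-1}c_k(z,w)$ with explicit coefficients, and then estimate the resulting sum term by term. Writing $P(z)-P(w)=\sum_{j=1}^{n}a_j(z^j-w^j)$ and using $z^j-w^j=(z-w)\sum_{i=0}^{j-1}z^iw^{j-1-i}$, one obtains
\[
\frac{P(z)-P(w)}{z-w}=\sum_{j=1}^{n}a_j\sum_{i=0}^{j-1}z^iw^{j-1-i}
=\sum_{i=0}^{n-1}z^i\sum_{j=i+1}^{n}a_jw^{j-1-i}.
\]
The key algebraic observation is that the inner sum $\sum_{j=i+1}^{n}a_jw^{j-1-i}$ is, up to a power of $w$, the tail $P(w)-\sum_{j=0}^{i}a_jw^j$ of the polynomial divided by $w^{i+1}$; so it is natural to group terms to display $z$ times the quotient as a combination of the quantities $P(z)-\sum_{j=0}^{k}a_jz^j$ (which at $z$ of modulus $|z|$ is what appears on the right-hand side). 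A cleaner route is Abel summation: set $T_k(w)=P(w)-\sum_{j=0}^{k}a_jw^j=\sum_{j=k+1}^{n}a_jw^j$ for $k=0,\dots,n-1$, note $T_{n}\equiv 0$, and rewrite $z\dfrac{P(z)-P(w)}{z-w}$ as a sum $\sum_{k=0}^{n-1}\bigl(T_k(z)-\text{something}\cdot T_k(w)\bigr)$ type identity whose $w$-dependence is carried by nonnegative weights.

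Concretely, I would first prove the pointwise identity
\[
z\,\frac{P(z)-P(w)}{z-w}=\sum_{k=0}^{n-1}\left(\frac{w}{z}\right)^{\!?}\!\bigl(P(z)-{\textstyle\sum_{j=0}^{k}a_jz^j}\bigr)\cdot(\text{coefficient}),
\]
after which the triangle inequality gives $\bigl|z\frac{P(z)-P(w)}{z-w}\bigr|\le \bigl(\sum_k |\text{coefficient}|\bigr)\max_{0\le k\le n-1}|P(z)-\sum_{j=0}^k a_jz^j|$, and the remaining task is to show the sum of the moduli of the coefficients equals $A(z,w)$. The coefficients will be monomials in $z,w$, so their moduli depend only on $|z|=:r$ and $|w|=:\rho$, and the sum collapses to a finite geometric-type sum; in the case $r\neq\rho$ this should telescope or sum to $\dfrac{r^n-\rho^n}{r^{n-1}(r-\rho)}=\sum_{i=0}^{n-1}\rho^i r^{-i+\dots}$ — indeed $\sum_{i=0}^{n-1}\rho^i r^{n-1-i}=\dfrac{r^n-\rho^n}{r-\rho}$, and dividing by $r^{n-1}$ yields exactly $A(z,w)$ for $r\neq\rho$, while $r=\rho$ gives $A=n$ by continuity (or directly). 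The equality case $\arg z=\arg w$ with $P(z)=a_0+a_nz^n$ then follows because all the monomials $z^iw^{j}$ become positive multiples of a common phase, so the triangle inequality is sharp, and for that $P$ there is only the single nontrivial partial-sum tail $|a_n z^n|$, making both sides literally equal.

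The main obstacle is getting the bookkeeping of the identity right: one must exhibit $z\frac{P(z)-P(w)}{z-w}$ as an \emph{explicit} nonnegative-coefficient (in modulus) combination of the tails $P(z)-\sum_{j=0}^k a_jz^j$ so that the triangle-inequality constant is exactly $\sum|\text{coeff}|$ and this sum is independent of the $a_j$. The cleanest way I see is: start from $\dfrac{P(z)-P(w)}{z-w}=\sum_{i=0}^{n-1}z^i\,g_i(w)$ with $g_i(w)=\sum_{j=i+1}^{n}a_j w^{j-1-i}$, then write $g_i(w)=w^{-(i+1)}\bigl(P(w)-\sum_{j=0}^{i}a_jw^j\bigr)$ only formally and instead resum over $i$: for fixed $j$, the term $a_j$ contributes $a_j\sum_{i=0}^{j-1}z^iw^{j-1-i}$; pairing this with the tail structure requires recognizing that $\sum_{i=0}^{j-1} z^i w^{j-1-i}$ applied across all $j\ge k+1$ reproduces $P(z)-\sum_{j=0}^{k}a_jz^j$ with weight $w^{?}$. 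Carrying this out carefully — or alternatively proving the final bound by a direct induction on $n$, peeling off the top coefficient $a_n$ — is where the real work lies; once the identity is in hand, the estimate and the equality discussion are routine. I would budget most of the write-up for the identity and relegate the geometric sum $\sum_{i=0}^{n-1}\rho^i r^{n-1-i}=\frac{r^n-\rho^n}{r-\rho}$ to a one-line remark.
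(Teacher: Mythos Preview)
Your plan coincides with the paper's: it proves the single identity
\[
z\,\frac{P(z)-P(w)}{z-w}=\sum_{k=0}^{n-1}\Bigl(P(z)-\sum_{j=0}^{k}a_jz^j\Bigr)\Bigl(\frac{w}{z}\Bigr)^{k}
\]
(obtained there by summation by parts) and then applies the triangle inequality; the sum of the moduli of the weights is $\sum_{k=0}^{n-1}(|w|/|z|)^{k}=A(z,w)$, and the equality discussion is exactly as you sketch.

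The only thing between you and a complete proof is the bookkeeping you yourself flag, and the obstacle is self-inflicted. In your expansion $z^j-w^j=(z-w)\sum_{i=0}^{j-1}z^iw^{j-1-i}$ the roles of $z$ and $w$ are ordered so that, after swapping sums, the inner sum is a tail of $P$ evaluated at $w$, not at $z$. Use instead the symmetric ordering $z^j-w^j=(z-w)\sum_{i=0}^{j-1}w^iz^{\,j-1-i}$; then
\[
\frac{P(z)-P(w)}{z-w}=\sum_{i=0}^{n-1}w^{i}\sum_{j=i+1}^{n}a_jz^{\,j-1-i},
\]
and multiplying by $z$ and factoring out $z^{-i}$ gives the displayed identity at once, with your ``?'' equal to $k$ and your ``coefficient'' equal to $1$. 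No induction or further Abel manipulation is needed.
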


\begin{proof} Fix $z\in\mathbb C\setminus\{0\}$. Summation by parts yields
	\[
	P(w)=
	P(z)\left(\frac{w}{z}\right)^n+\left(1-\frac{w}{z}\right)\sum_{k=1}^{n}\left(\sum_{j=0}^{k-1}a_jz^j\right)\left(\frac{w}{z}\right)^{k-1}.
	\]
	This gives
	\begin{equation}\label{main formula}
	z\frac{P(z)-P(w)}{z-w}=\sum_{k=0}^{n-1}\left(P(z)-\sum_{j=0}^{k}a_jz^j\right)\left(\frac{w}{z}\right)^k.
	\end{equation}
	The result follows.
\end{proof}

\section{Proor of Theorem \ref{main thm}}

Denote
\[
\rho_k(P)(z):=\sum_{j=k}^{k_n}c_jz^j,~k=0,1,\ldots,k_n,
\]
where
\[
c_j=
\begin{cases}
0,\hfill&\mbox{if}~j\not\in\{k_\nu\}_{\nu=0}^n,\cr
a_j,\hfill&\mbox{if}~j\in\{k_\nu\}_{\nu=0}^n.
\end{cases}
\]

(i) By Lemma \ref{prop1} the conditions (\ref{main condition}) are equivalent to
\begin{equation}\label{decrease ineq}
|P(z)|\ge|\rho_{k_0}(P)(z)|\ge\ldots\ge|\rho_{k_n}(P)(z)|=\left|a_{n}z^{k_n}\right|\quad\forall z\in\overline{\mathbb D}.
\end{equation}
This gives that $P(z)\not=0$ for $z\in\overline{\mathbb D}\setminus\{0\}$. If $k_0=0$ then in addition  $P(0)=a_{k_0}\not=0$.

(ii) It follows from (\ref{decrease ineq}) that the sequence $\{|\rho_{k_\nu}(P)(z)|\}_{\nu=0}^n$ is non-increasing. Since $\rho_{j}(P)=\rho_{k_\nu}(P)$
for $k_{\nu-1}<j\le k_{\nu}$, $\nu=0,1,\ldots,n,$ where $k_{-1}=-1$, we conclude that the sequence $\{|\rho_j(P)(z)|\}_{j=0}^{k_n}$ also is non-increasing. Therefore by Lemma \ref{lemma2} we get
\begin{eqnarray*}
	\left|z\frac{P(z)-P(zt)}{1-t}\right|&\le& k_n|\rho_{n_0}(P)(z)|\\
	&\le&k_n|P(z)|
\end{eqnarray*}
for all $t\in\mathbb T$. In particularly, for $t=1$ we obtain (\ref{B3}). 

Now assume that for some $z\in\mathbb D$ in (\ref{B3}) occurs equality. Then according to the assertion (i) proved above, the function
\[
F(t):=\frac{tP'(t)}{k_nP(t)}=\frac{k_0}{k_n}+\frac{(k_1-k_0)a_1}{k_na_0}t^{k_1-k_0}+\ldots
\]
is holomorphic in $\mathbb D$, $|F(t)|\le 1$ for all $t\in\mathbb D$ and $|F(z)|=1$. Therefore by maximum modulus principle $F(t)=c$ for all $t\in\mathbb D$ with $|c|=1$. But $F(0)=k_0/k_n$. So $c=k_0/k_n=1$. These are equivalent to $n=0$ and $P(t)=\mathrm{e^M}t^{k_0}$ for some $M\in\mathbb C$.

(iii) Let $k_0=0$. Then according to the above conclusions about the function $F$, we have that $F(0)=0$. Therefore  by Schwarz lemma we get $|F(t)|\le|t|$ for all $t\in\mathbb D$. Moreover, if $|F(z)|=|z|$ for some $z\in\mathbb D\setminus\{0\}$, then $F(t)=ct$ for some $c\in\mathbb C$ with $|c|=1$. Once this is done, it follows that  
\[
c=F'(t)=\frac{k_1^2a_1}{k_na_0}t^{k_1-1}+\cdots,\quad t\in\mathbb D.
\]
Hence, necessarily $k_1=1$ and $|a_1|=k_n|a_0|$. However, under condition (\ref{main condition}),
\begin{eqnarray*}
	\left|\frac{a_1}{a_0}\right|&=&\left|\int_\mathbb Tt^{k_1-k_0}\mathrm{Re}\left(1+2\sum_{j=1}^n\frac{a_j}{a_0}t^{k_j-k_0}\right)d\sigma(t)\right|\\
	&\le&\int_\mathbb T\mathrm{Re}\left(1+2\sum_{j=1}^n\frac{a_j}{a_0}t^{k_j-k_0}\right)d\sigma(t)\\
	&=&1.
\end{eqnarray*}
Thus $k_n=1$ or equivalently, $n=1$. On the other side, for $n=1$ the condition (\ref{main condition}) implies $|a_0|\ge 2|a_1|$. This is a contradiction. Hence, $|F(t)|<|t|$ for all $t\in\mathbb D$. 

The proof is complete.

\bibliographystyle{amsplain}

\end{document}